\newtheorem{theorem}{Theorem}
\newtheorem{lemma}{Lemma}
\theoremstyle{definition}
\newtheorem{definition}{Definition}
\title{\textbf{\LARGE Transitivity of implicative aBE algebras}} 
\author{ Denis Zelent}
\date{}
\begin{document}
\maketitle

\noindent
\textbf{Abstract.} {\footnotesize 
We prove that every implicative aBE algebra satisfies the transitivity property. This means that every implicative aBE algebra is a Tarski algebra, and thus is also a commutative BCK algebra. \\
}
{\textsf{2020 Mathematics Subject Classification:} 03G25.}\\
{\textsf{Keywords:} BE algebra, BCK algebra, Tarski algebra, implicativity, transitivity.}

\bigskip\bigskip\bigskip

BCK-algebras were introduced by Y. Imai and K. Iséki in \cite{Imai1966}, and were later generalized by H. S. Kim and Y. H. Kim in the definition of BE algebras \cite{Kim07}. In 2016, A. Iorgulescu defined numerous new generalizations, such as aBE, BE** and aBE** algebras \cite{Ior16}. The definition of implicativity for a BCK algebra was introduced by K. Iséki and S. Tanaka in \cite{Iseki78}, and was studied for various generalizations of BCK algebras by A. Walendziak in \cite{Wal18, Wal}. In \cite{Wal} author posed an open problem: whether there exists an implicative aBE algebra that does not satisfy the transitivity property (Open Problem 3.21). In this paper, we prove that this property is satisfied by every implicative aBE algebra, which gives a negative answer to the posed question. This result gives us immediately a few implications about implicative aBE algebras. Firstly, every implicative aBE algebra is a Tarski algebra \cite[Corollary 3.19]{Wal}. This means that every implicative aBE algebra is an implicative aBE** algebra \cite[Theorem 3.18]{Wal}, and thus it is also a commutative BCK algebra \cite[Corollary 3.13]{Wal}.

\bigskip

We start by introducing the definitions of implicative aBE algebra and transitivity property.

\begin{definition} \label{bckdef}
An \textsl{implicative aBE algebra} is an algebra of the form  (X, $\rightarrow$, $1$), where X is the non-empty set with a designated element $1$ and the arrow as a binary operation satisfying the following axioms:
\begin{equation}\label{bckdef1}
    1\rightarrow x = x
\end{equation}
\begin{equation}\label{bckdef2}
    x\rightarrow 1 = 1
\end{equation}
\begin{equation}\label{bckdef3}
    x\rightarrow x = 1
\end{equation}
\begin{equation}\label{bckdef4}
    x\rightarrow(y \rightarrow z) = y\rightarrow (x\rightarrow z)
\end{equation}
\begin{equation}\label{bckdef5}
    x\rightarrow y = y\rightarrow x = 1 \implies x=y
\end{equation}
\begin{equation}\label{bckdef6}
    (x\rightarrow y)\rightarrow x = x
\end{equation}
\end{definition}

\begin{definition}
    We say that an (implicative aBE) algebra (X, $\rightarrow$, $1$) satisfies the \textsl{transitivity property} if and only if it satisfies
    \begin{equation}
        x\rightarrow y = y\rightarrow z = 1 \implies x\rightarrow z = 1.
    \end{equation}
\end{definition}

To prove that every implicative aBE algebra satisfies the transitivity property, we would first state and prove some relevant lemmas. 
\begin{lemma} If (X, $\rightarrow$, $1$) is an implicative aBE algebra, then it satisfies:
    \begin{equation}\label{eq136}
        \text{a)   } x = y\rightarrow x \text{ or } (y\rightarrow x)\rightarrow x \neq 1
    \end{equation}
    \begin{equation}\label{eq1.538}
        \text{b)   } x = (x\rightarrow y)\rightarrow y \text{ or } ((x\rightarrow y)\rightarrow y)\rightarrow x \neq 1.
    \end{equation}
\end{lemma}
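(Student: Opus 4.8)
The plan is to observe that each of the two disjunctions is logically just an implication of the form ``if $(\cdots)\rightarrow x = 1$ then $x = (\cdots)$,'' and to prove that implication by invoking the antisymmetry axiom \eqref{bckdef5}. For part a) I would set $a := y\rightarrow x$ and aim to show $a\rightarrow x = 1 \implies x = a$. Since \eqref{bckdef5} delivers $x = a$ as soon as both $a\rightarrow x = 1$ and $x\rightarrow a = 1$ hold, and the hypothesis supplies the first of these, everything reduces to verifying that the reverse comparison $x\rightarrow a = 1$ holds automatically.

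The key computational step is therefore that $x\rightarrow(y\rightarrow x) = 1$ in every implicative aBE algebra. This drops straight out of the exchange axiom \eqref{bckdef4}, which rewrites $x\rightarrow(y\rightarrow x)$ as $y\rightarrow(x\rightarrow x)$; then \eqref{bckdef3} and \eqref{bckdef2} collapse this to $y\rightarrow 1 = 1$. With this in hand, part a) is immediate: assuming $(y\rightarrow x)\rightarrow x = 1$ and pairing it with $x\rightarrow(y\rightarrow x) = 1$, antisymmetry \eqref{bckdef5} forces $x = y\rightarrow x$, which is precisely the first disjunct of \eqref{eq136}.

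Part b) runs in exact parallel, with $b := (x\rightarrow y)\rightarrow y$ in place of $a$. The analogous automatic comparison to establish is $x\rightarrow b = 1$, that is, $x\rightarrow\big((x\rightarrow y)\rightarrow y\big) = 1$. Again \eqref{bckdef4} does the work: applying it with outer operands $x$ and $x\rightarrow y$ turns the left-hand side into $(x\rightarrow y)\rightarrow(x\rightarrow y)$, which equals $1$ by \eqref{bckdef3}. Consequently, if $\big((x\rightarrow y)\rightarrow y\big)\rightarrow x = 1$, then antisymmetry \eqref{bckdef5} gives $x = (x\rightarrow y)\rightarrow y$, the first disjunct of \eqref{eq1.538}.

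I do not expect a genuine obstacle here: the only real content is spotting the correct instantiation of the exchange axiom so that the ``reverse'' inequality becomes a tautology of the form $t\rightarrow t = 1$, after which antisymmetry closes both cases. It is worth noting that this lemma uses only \eqref{bckdef2}, \eqref{bckdef3}, \eqref{bckdef4} and \eqref{bckdef5}; the implicativity axiom \eqref{bckdef6} plays no role, which suggests it will be reserved for the transitivity argument to come.
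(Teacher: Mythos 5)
Your proposal is correct and follows essentially the same route as the paper: both arguments rest on the antisymmetry axiom (\ref{bckdef5}) combined with the exchange-axiom computations $x\rightarrow(y\rightarrow x) = y\rightarrow(x\rightarrow x) = 1$ and $x\rightarrow((x\rightarrow y)\rightarrow y) = (x\rightarrow y)\rightarrow(x\rightarrow y) = 1$. The only difference is presentational — the paper states antisymmetry as a three-way disjunction and eliminates one disjunct, while you phrase the same content as an implication whose missing hypothesis is supplied by those computations.
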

\begin{proof}
    Firstly, we note that (\ref{bckdef5}) implies 
    $$x = y \text{ or } x\rightarrow y \neq 1 \text{ or } y\rightarrow x \neq 1.$$
    To prove (\ref{eq136}) we use that fact for $x$ and $y\rightarrow x$: 
    $$x = y\rightarrow x \text{ or } x\rightarrow (y\rightarrow x) \neq 1 \text{ or } (y\rightarrow x)\rightarrow x \neq 1.$$
    Noticing that $x\rightarrow (y\rightarrow x) \stackrel{(4)}{=} y\rightarrow (x\rightarrow x) \stackrel{(3)}{=} y\rightarrow 1 \stackrel{(2)}{=} 1$ concludes the proof of the first equation.
    Proof of \ref{eq1.538} is analogical but with $x$ and $(x\rightarrow y)\rightarrow y$ and observation that $x\rightarrow((x\rightarrow y)\rightarrow y) \stackrel{(4)}{=} (x\rightarrow y)\rightarrow(x\rightarrow y) \stackrel{(3)}{=} 1$.
\end{proof}

\begin{lemma} If (X, $\rightarrow$, $1$) is an implicative aBE algebra, then it satisfies:
    \begin{equation}\label{eq199}
        x\rightarrow y = (z\rightarrow x)\rightarrow (x\rightarrow y)
    \end{equation}
\end{lemma}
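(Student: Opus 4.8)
The plan is to establish \eqref{eq199} by the antisymmetry axiom \eqref{bckdef5}, using the preceding lemma to absorb the easy direction. Write $L = x\rightarrow y$ and $R = (z\rightarrow x)\rightarrow(x\rightarrow y)$. One inclusion is immediate: by \eqref{bckdef4}, \eqref{bckdef3} and \eqref{bckdef2} we get $L\rightarrow R = (x\rightarrow y)\rightarrow((z\rightarrow x)\rightarrow(x\rightarrow y)) = (z\rightarrow x)\rightarrow((x\rightarrow y)\rightarrow(x\rightarrow y)) = (z\rightarrow x)\rightarrow 1 = 1$. This is precisely the content already packaged in \eqref{eq136}: instantiating part a) of the previous lemma with its $x$ replaced by $x\rightarrow y$ and its $y$ replaced by $z\rightarrow x$ yields $x\rightarrow y = (z\rightarrow x)\rightarrow(x\rightarrow y)$ unless $((z\rightarrow x)\rightarrow(x\rightarrow y))\rightarrow(x\rightarrow y)\neq 1$. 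So the whole lemma reduces to proving the single equality $((z\rightarrow x)\rightarrow(x\rightarrow y))\rightarrow(x\rightarrow y) = 1$.

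To prove that reduced claim I would first extract an auxiliary identity from the implicative axiom \eqref{bckdef6}. Prefixing $(x\rightarrow y)\rightarrow x = x$ with $z\rightarrow(\cdot)$ and then commuting the two antecedents by \eqref{bckdef4} gives $(x\rightarrow y)\rightarrow(z\rightarrow x) = z\rightarrow x$. The value of this identity is that it licenses rewriting the subterm $z\rightarrow x$ occurring in $R$ as the longer expression $(x\rightarrow y)\rightarrow(z\rightarrow x)$. After that substitution, $(z\rightarrow x)\rightarrow(x\rightarrow y)$ becomes $((x\rightarrow y)\rightarrow(z\rightarrow x))\rightarrow(x\rightarrow y)$, which matches the left-hand side of \eqref{bckdef6} with $x\rightarrow y$ playing the role of $x$ and $z\rightarrow x$ playing the role of $y$; it therefore collapses to $x\rightarrow y$. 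In fact this computation already proves the stronger equality $(z\rightarrow x)\rightarrow(x\rightarrow y) = x\rightarrow y$ directly, whence $((z\rightarrow x)\rightarrow(x\rightarrow y))\rightarrow(x\rightarrow y) = (x\rightarrow y)\rightarrow(x\rightarrow y) = 1$ by \eqref{bckdef3}, so the reduction through \eqref{eq136} is closed either way.

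The step I expect to be the genuine obstacle is escaping the loop one falls into when using only the exchange axiom \eqref{bckdef4}: rewriting $((z\rightarrow x)\rightarrow(x\rightarrow y))\rightarrow(x\rightarrow y)$ with \eqref{bckdef4} alone simply regenerates the same term, so the implicative axiom \eqref{bckdef6} has to enter in an essential and slightly counterintuitive way. The key move that unlocks the argument is the apparently backwards replacement of $z\rightarrow x$ by $(x\rightarrow y)\rightarrow(z\rightarrow x)$, since it is exactly this rewriting that exposes the pattern $(p\rightarrow q)\rightarrow p$ of \eqref{bckdef6} at the top level. Once that substitution is identified, both the reduction via the preceding lemma and the direct computation are only a couple of lines.
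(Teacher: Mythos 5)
Your direct computation is correct and is essentially the paper's own proof read in reverse: the paper expands $x\rightarrow y$ by axiom (\ref{bckdef6}) into $((x\rightarrow y)\rightarrow(z\rightarrow x))\rightarrow(x\rightarrow y)$ and then simplifies the inner subterm by (\ref{bckdef4}) and (\ref{bckdef6}), which is exactly your auxiliary identity $(x\rightarrow y)\rightarrow(z\rightarrow x)=z\rightarrow x$ followed by the top-level collapse under (\ref{bckdef6}). The preliminary detour through antisymmetry and (\ref{eq136}) is redundant, as you note yourself, since the direct computation already yields the equality.
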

\begin{proof}
    $x\rightarrow y \stackrel{(6)}{=} ((x\rightarrow y)\rightarrow (z\rightarrow x))\rightarrow (x\rightarrow y) \stackrel{(4)}{=} (z\rightarrow ((x\rightarrow y)\rightarrow x))\rightarrow (x\rightarrow y) \stackrel{(6)}{=} (z\rightarrow x)\rightarrow (x\rightarrow y)$.
\end{proof}

\begin{lemma} If (X, $\rightarrow$, $1$) is an implicative aBE algebra, then it satisfies:
    \begin{equation}\label{eq1298}
        ((y\rightarrow x)\rightarrow z)\rightarrow t = ((y\rightarrow x)\rightarrow z)\rightarrow ((x\rightarrow z)\rightarrow t)
    \end{equation}
\end{lemma}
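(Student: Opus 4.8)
The plan is to reduce the claimed identity to a single application of the previous lemma (\ref{eq199}) after reshaping the right-hand side with the exchange axiom (\ref{bckdef4}). Writing $p = (y\rightarrow x)\rightarrow z$ for brevity, the right-hand side $p\rightarrow((x\rightarrow z)\rightarrow t)$ equals $(x\rightarrow z)\rightarrow(p\rightarrow t)$ by (\ref{bckdef4}). Hence the whole statement is equivalent to the simpler claim $p\rightarrow t = (x\rightarrow z)\rightarrow(p\rightarrow t)$, which has exactly the shape of (\ref{eq199}) provided the prefix $x\rightarrow z$ can be recognized as $x\rightarrow p$.

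The crux of the argument is precisely this identification. First I would compute $x\rightarrow p = x\rightarrow((y\rightarrow x)\rightarrow z) \stackrel{(\ref{bckdef4})}{=} (y\rightarrow x)\rightarrow(x\rightarrow z)$, and then apply (\ref{eq199}) with its outer variable set to $y$ to collapse $(y\rightarrow x)\rightarrow(x\rightarrow z)$ back to $x\rightarrow z$. This shows $x\rightarrow p = x\rightarrow z$, so the factor appearing in the target expression is exactly the one that Lemma~2 would produce.

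With that identification in hand, I would apply (\ref{eq199}) a second time, now to $p\rightarrow t$ with the auxiliary variable chosen as $x$, obtaining $p\rightarrow t = (x\rightarrow p)\rightarrow(p\rightarrow t) = (x\rightarrow z)\rightarrow(p\rightarrow t)$. A final pass through (\ref{bckdef4}) rewrites $(x\rightarrow z)\rightarrow(p\rightarrow t)$ as $p\rightarrow((x\rightarrow z)\rightarrow t)$, which is the desired right-hand side, completing the equational chain.

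I expect the only delicate point to be bookkeeping the two distinct instantiations of (\ref{eq199}) — once to simplify $x\rightarrow p$ down to $x\rightarrow z$, and once to reinsert that factor in front of $p\rightarrow t$ — since both the variables and their roles differ between the two uses. No case distinction or appeal to antisymmetry (the facts in (\ref{eq136}) and (\ref{eq1.538})) is needed here; the entire proof is a purely equational manipulation built from axiom (\ref{bckdef4}) together with identity (\ref{eq199}).
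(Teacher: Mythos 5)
Your proof is correct and is essentially the paper's own argument: both hinge on establishing $x\rightarrow p = x\rightarrow z$ via axiom (\ref{bckdef4}) and (\ref{eq199}), then applying (\ref{eq199}) a second time to $p\rightarrow t$ and finishing with one more use of (\ref{bckdef4}). The only difference is cosmetic — you substitute $x\rightarrow p = x\rightarrow z$ before the final exchange step, while the paper does it after.
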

\begin{proof}
    Set $a:=((y\rightarrow x)\rightarrow z)$.
    We have $x\rightarrow a = x\rightarrow ((y\rightarrow x)\rightarrow z) \stackrel{(4)}{=} (y\rightarrow x)\rightarrow (x\rightarrow z) \stackrel{(\ref{eq199})}{=} x\rightarrow z$. Then $a\rightarrow t \stackrel{(\ref{eq199})}{=} (x\rightarrow a)\rightarrow (a\rightarrow t)\stackrel{(4)}{=} a\rightarrow((x\rightarrow a)\rightarrow t) = a\rightarrow ((x\rightarrow z)\rightarrow t).$

\end{proof}

\begin{lemma} If (X, $\rightarrow$, $1$) is an implicative aBE algebra, then it satisfies:
    \begin{equation}\label{eq368}
        (((x\rightarrow y)\rightarrow z)\rightarrow y)\rightarrow (x\rightarrow y) = 1
    \end{equation}
\end{lemma}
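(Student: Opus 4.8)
The plan is to reduce the identity to a short chain organized around a single auxiliary element. Writing $p := x\rightarrow y$ and $a := ((x\rightarrow y)\rightarrow z)\rightarrow y = (p\rightarrow z)\rightarrow y$, the target (\ref{eq368}) is exactly the statement $a\rightarrow(x\rightarrow y) = 1$. So I would first try to understand the element $a$ well enough to collapse $a\rightarrow(x\rightarrow y)$ to $1$, rather than attacking the nested expression head-on.

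The first key step is to compute $x\rightarrow a$. Using the exchange axiom (\ref{bckdef4}) to pull $x$ inside, $x\rightarrow a = x\rightarrow((p\rightarrow z)\rightarrow y) = (p\rightarrow z)\rightarrow(x\rightarrow y) = (p\rightarrow z)\rightarrow p$, and then axiom (\ref{bckdef6}) in the form $(p\rightarrow z)\rightarrow p = p$ finishes it, giving $x\rightarrow a = p = x\rightarrow y$. I expect this to be the main obstacle: the trick is to realize that $a$ is shaped so that, after one application of (\ref{bckdef4}), its head becomes $(p\rightarrow z)\rightarrow p$, which axiom (\ref{bckdef6}) annihilates. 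This is precisely where the specific form $((x\rightarrow y)\rightarrow z)\rightarrow y$ of the left factor is exploited, and spotting it is the whole game.

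With $x\rightarrow a = x\rightarrow y$ established, the second step is an absorption identity obtained from Lemma 2. Applying (\ref{eq199}) in the instance $a\rightarrow t = (x\rightarrow a)\rightarrow(a\rightarrow t)$, then exchanging with (\ref{bckdef4}) and substituting $x\rightarrow a = x\rightarrow y$, I obtain $a\rightarrow t = a\rightarrow((x\rightarrow y)\rightarrow t)$ for every $t$. This mirrors the manipulation already used in the proof of Lemma 3, so it should go through routinely.

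Finally I would specialize $t := x\rightarrow y$: the right-hand side becomes $a\rightarrow((x\rightarrow y)\rightarrow(x\rightarrow y))$, which equals $a\rightarrow 1$ by (\ref{bckdef3}) and then $1$ by (\ref{bckdef2}). Since the left-hand side is $a\rightarrow(x\rightarrow y)$, this is exactly (\ref{eq368}). It is worth noting that this route uses only axioms (\ref{bckdef2})--(\ref{bckdef4}), (\ref{bckdef6}) together with Lemma 2; neither Lemma 1 nor Lemma 3 is needed here, so I would expect those to be reserved for the later steps on the way to transitivity.
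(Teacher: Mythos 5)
Your proof is correct: every step is a valid instance of the axioms or of Lemma 2 (identity (\ref{eq199})), and there is no circularity, since (\ref{eq199}) is proved before and independently of (\ref{eq368}). Your route, however, is not the paper's. The paper proves (\ref{eq368}) in one line from the axioms alone, with no lemma at all: writing $a := (x\rightarrow y)\rightarrow z$, it uses (\ref{bckdef6}) to rewrite the consequent $x\rightarrow y$ as $a\rightarrow(x\rightarrow y)$, then applies the exchange axiom (\ref{bckdef4}) twice to turn $(a\rightarrow y)\rightarrow\bigl(a\rightarrow(x\rightarrow y)\bigr)$ into $x\rightarrow\bigl((a\rightarrow y)\rightarrow(a\rightarrow y)\bigr)$, which collapses to $1$ by (\ref{bckdef3}) and (\ref{bckdef2}). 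You instead organize everything around the whole left factor $A := ((x\rightarrow y)\rightarrow z)\rightarrow y$: your first step $x\rightarrow A = x\rightarrow y$ is precisely the paper's later identity (\ref{eq44}) (proved there by the same two moves, in the opposite order), and your second step transplants the argument of the paper's proof of (\ref{eq1298}) to derive the general identity $A\rightarrow t = A\rightarrow((x\rightarrow y)\rightarrow t)$ for all $t$, of which (\ref{eq368}) is the specialization $t := x\rightarrow y$. Your version buys uniformity and a strictly stronger intermediate statement --- it makes visible that (\ref{eq1298}), (\ref{eq368}) and (\ref{eq44}) all flow from one absorption mechanism built on axiom (\ref{bckdef6}); the paper's version buys economy --- it is shorter and keeps (\ref{eq368}) dependent only on the axioms rather than on Lemma 2.
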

\begin{proof}
    Set $a := (x\rightarrow y)\rightarrow z$. We get $(a\rightarrow y)\rightarrow (x\rightarrow y) \stackrel{(6)}{=} (a\rightarrow y) \rightarrow (a \rightarrow (x \rightarrow y)) \stackrel{(4)}{=} (a \rightarrow y) \rightarrow (x \rightarrow (a \rightarrow y)) \stackrel{(4)}{=} x \rightarrow ((a \rightarrow y) \rightarrow (a \rightarrow y)) \stackrel{(3)}{=} x \rightarrow 1 \stackrel{(2)}{=} 1$.
    
\end{proof}

\begin{lemma} If (X, $\rightarrow$, $1$) is an implicative aBE algebra, then it satisfies:
    \begin{equation}\label{eq3.775}
        (((((x\rightarrow y)\rightarrow y)\rightarrow x)\rightarrow y)\rightarrow y) = 1
    \end{equation}
\end{lemma}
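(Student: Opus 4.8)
The plan is to sidestep the tempting antisymmetry argument and instead rewrite the expression until it becomes a verbatim instance of (\ref{eq368}). Write $u := (x\rightarrow y)\rightarrow y$, so the claim reads $((u\rightarrow x)\rightarrow y)\rightarrow y = 1$. The naive first attempt is to prove $(u\rightarrow x)\rightarrow y = y$ via antisymmetry (\ref{bckdef5}): one inequality is immediate, since $y\rightarrow((u\rightarrow x)\rightarrow y) \stackrel{(\ref{bckdef4})}{=} (u\rightarrow x)\rightarrow(y\rightarrow y) \stackrel{(\ref{bckdef3})}{=} (u\rightarrow x)\rightarrow 1 \stackrel{(\ref{bckdef2})}{=} 1$, but the reverse inequality $((u\rightarrow x)\rightarrow y)\rightarrow y = 1$ is precisely the statement to be proved. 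So antisymmetry is circular here; moreover, Lemma 1 gives no traction, because every dichotomy it produces degenerates into a tautology on these particular terms. This circularity is the main obstacle I expect, and I anticipate the resolution to be purely computational rather than case-based.

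The key step is to rewrite the trailing factor $\rightarrow y$ by means of (\ref{eq1298}). The subterm $(u\rightarrow x)\rightarrow y$ fits the prefix $((y\rightarrow x)\rightarrow z)$ of (\ref{eq1298}) under the substitution (of the variables appearing in (\ref{eq1298})) $y\mapsto u$, $x\mapsto x$, $z\mapsto y$, and taking $t\mapsto y$ then gives $((u\rightarrow x)\rightarrow y)\rightarrow y = ((u\rightarrow x)\rightarrow y)\rightarrow((x\rightarrow y)\rightarrow y) = ((u\rightarrow x)\rightarrow y)\rightarrow u$. Thus the goal is transformed into proving $((u\rightarrow x)\rightarrow y)\rightarrow u = 1$.

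The second step is to recognize this transformed expression as an instance of (\ref{eq368}). Substituting (into the variables of (\ref{eq368})) $x\mapsto x\rightarrow y$, $y\mapsto y$, and $z\mapsto x$, the parametrized term $(x\rightarrow y)$ of (\ref{eq368}) becomes $(x\rightarrow y)\rightarrow y = u$, so (\ref{eq368}) reads exactly $((u\rightarrow x)\rightarrow y)\rightarrow u = 1$. Chaining the two displayed steps then yields $((u\rightarrow x)\rightarrow y)\rightarrow y = 1$, which is the claim.

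The only delicate points are bookkeeping with the substitutions. I would double-check that $(u\rightarrow x)\rightarrow y$ really instantiates the prefix pattern of (\ref{eq1298}) — it does, since $u\rightarrow x$ is literally of the form $(\text{something})\rightarrow x$ — and that the residual factor $(x\rightarrow y)\rightarrow y$ produced by the (\ref{eq1298}) rewrite coincides with the parametrized term $u$ appearing in (\ref{eq368}) under the chosen substitution. Once these two identifications line up, no appeal to Lemma 1 and no case analysis is required: the two structural identities (\ref{eq1298}) and (\ref{eq368}) do all the work in a two-line chain.
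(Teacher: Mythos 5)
Your proof is correct and is essentially identical to the paper's own argument: the paper also rewrites $((((x\rightarrow y)\rightarrow y)\rightarrow x)\rightarrow y)\rightarrow y$ via (\ref{eq1298}) into $((((x\rightarrow y)\rightarrow y)\rightarrow x)\rightarrow y)\rightarrow((x\rightarrow y)\rightarrow y)$ and then concludes by recognizing this as an instance of (\ref{eq368}). Your substitutions check out, and the only difference is expository (the abbreviation $u$ and the discussion of the circular antisymmetry attempt).
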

\begin{proof}
    $(((((x\rightarrow y)\rightarrow y)\rightarrow x)\rightarrow y)\rightarrow y) \stackrel{(\ref{eq1298})}{=} ((((x\rightarrow y)\rightarrow y)\rightarrow x)\rightarrow y)\rightarrow ((x\rightarrow y)\rightarrow y) \stackrel{(\ref{eq368})}{=} 1$.
\end{proof}

\begin{lemma} If (X, $\rightarrow$, $1$) is an implicative aBE algebra, then it satisfies:
    \begin{equation}\label{eq3.999}
        y = (((x\rightarrow y)\rightarrow y)\rightarrow x)\rightarrow y
    \end{equation}
\end{lemma}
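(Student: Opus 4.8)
The plan is to prove equality via the antisymmetry axiom (\ref{bckdef5}). Writing the right-hand side as $b := (((x\rightarrow y)\rightarrow y)\rightarrow x)\rightarrow y$, it suffices to verify both $y\rightarrow b = 1$ and $b\rightarrow y = 1$, since then (\ref{bckdef5}) forces $y = b$, which is exactly the claimed identity (\ref{eq3.999}).

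The striking feature is that one of the two required implications has already been established: the previous Lemma (\ref{eq3.775}) states precisely that $b\rightarrow y = (((((x\rightarrow y)\rightarrow y)\rightarrow x)\rightarrow y)\rightarrow y) = 1$. So half of the antisymmetry hypothesis comes for free, and the role of the chain of lemmas leading up to (\ref{eq3.775}) is essentially to supply this direction.

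For the remaining direction $y\rightarrow b = 1$, I expect a one-line computation: abbreviating $c := ((x\rightarrow y)\rightarrow y)\rightarrow x$, we have $y\rightarrow b = y\rightarrow (c\rightarrow y) \stackrel{(4)}{=} c\rightarrow (y\rightarrow y) \stackrel{(3)}{=} c\rightarrow 1 \stackrel{(2)}{=} 1$, using exchange, reflexivity, and the top axiom in succession. This is the familiar pattern that $y\rightarrow(z\rightarrow y)=1$ always holds, applied with $z = c$.

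I do not anticipate a genuine obstacle here, as the heavy lifting was done in (\ref{eq3.775}); the only thing to be careful about is correctly matching the two elements fed into (\ref{bckdef5}) so that the instance of (\ref{eq3.775}) lines up verbatim with the $b\rightarrow y = 1$ clause. Once both clauses are in hand, applying (\ref{bckdef5}) to $y$ and $b$ closes the argument immediately.
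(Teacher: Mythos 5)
Your proof is correct and takes essentially the same route as the paper: the paper deduces the identity by combining (\ref{eq3.775}) with (\ref{eq136}), and (\ref{eq136}) is exactly the packaged form of the argument you spell out, namely antisymmetry (\ref{bckdef5}) together with the computation $y\rightarrow(c\rightarrow y)\stackrel{(4)}{=}c\rightarrow(y\rightarrow y)\stackrel{(3)}{=}c\rightarrow 1\stackrel{(2)}{=}1$. The only difference is that you inline that auxiliary lemma instead of citing it.
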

\begin{proof}
    Follows directly from (\ref{eq136}) and (\ref{eq3.775}).
\end{proof}

\begin{lemma} If (X, $\rightarrow$, $1$) is an implicative aBE algebra, then it satisfies:
    \begin{equation}\label{eq44}
        x\rightarrow y = x\rightarrow(((x\rightarrow y)\rightarrow z)\rightarrow y)
    \end{equation}
\end{lemma}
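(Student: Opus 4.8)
The plan is to simplify the right-hand side directly by elementary rewriting, without invoking the heavier structural lemmas (\ref{eq1298})--(\ref{eq3.999}). First I would abbreviate $a := (x\rightarrow y)\rightarrow z$, so that the claim reduces to showing $x\rightarrow y = x\rightarrow(a\rightarrow y)$. The natural opening move is to apply the exchange axiom (4) to the right-hand side, swapping the outer $x$ with $a$:
$$x\rightarrow(a\rightarrow y) \stackrel{(4)}{=} a\rightarrow(x\rightarrow y) = ((x\rightarrow y)\rightarrow z)\rightarrow(x\rightarrow y).$$

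The second and final step is to recognize that this last expression is an instance of axiom (6), namely $(u\rightarrow v)\rightarrow u = u$, read with $u := x\rightarrow y$ and $v := z$. Applying it yields
$$((x\rightarrow y)\rightarrow z)\rightarrow(x\rightarrow y) \stackrel{(6)}{=} x\rightarrow y,$$
which closes the chain and establishes the identity.

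I do not anticipate a genuine obstacle: the whole argument is a two-step calculation using only (4) and (6). The sole point deserving attention is the pattern-matching in the second step, where one must read the compound subterm $x\rightarrow y$ as a single unit playing the role of the distinguished variable in axiom (6). The temptation to deploy the accumulated machinery about iterated implications (Lemmas~\ref{eq368}--\ref{eq3.999}) should be resisted, as it is unnecessary here; the direct route is both shorter and cleaner.
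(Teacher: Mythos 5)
Your proof is correct and is essentially the paper's own argument read in the opposite direction: the paper starts from $x\rightarrow y$, expands it by axiom (6) to $((x\rightarrow y)\rightarrow z)\rightarrow(x\rightarrow y)$, and then applies the exchange axiom (4), whereas you start from the right-hand side and apply (4) then (6) with exactly the same instantiations. Since equational reasoning is symmetric, the two proofs are the same.
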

\begin{proof}
    $x\rightarrow y \stackrel{(6)}{=} ((x\rightarrow y)\rightarrow z)\rightarrow (x\rightarrow y) \stackrel{(4)}{=} x\rightarrow (((x\rightarrow y)\rightarrow z)\rightarrow y) $.
\end{proof}

\begin{lemma} If (X, $\rightarrow$, $1$) is an implicative aBE algebra, then it satisfies:
    \begin{equation}\label{eq3.112}
        ((x\rightarrow y)\rightarrow y)\rightarrow x = ((x\rightarrow y)\rightarrow y)\rightarrow (y\rightarrow x)
    \end{equation}
\end{lemma}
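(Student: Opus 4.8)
The plan is to read the right-hand side through the exchange axiom and then recognise the whole identity as a single instance of the already-proved equation (\ref{eq44}). First I would set $a := (x\rightarrow y)\rightarrow y$ and $w := a\rightarrow x$, so that the left-hand side is exactly $w$. Applying (\ref{bckdef4}) to the right-hand side gives $a\rightarrow(y\rightarrow x) = y\rightarrow(a\rightarrow x) = y\rightarrow w$, so the assertion is equivalent to $w = y\rightarrow w$. This reformulation already signals which ingredient should be decisive: (\ref{eq3.999}) says precisely that $w\rightarrow y = y$, so I expect the proof to pivot on forcing a factor of the shape $w\rightarrow y$ to appear and then killing it.

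The key step is to substitute $x\mapsto a$, $y\mapsto x$, $z\mapsto y$ into (\ref{eq44}). Since (\ref{eq44}) reads $x\rightarrow y = x\rightarrow(((x\rightarrow y)\rightarrow z)\rightarrow y)$, this substitution produces
\[ a\rightarrow x = a\rightarrow\big(((a\rightarrow x)\rightarrow y)\rightarrow x\big) = a\rightarrow\big((w\rightarrow y)\rightarrow x\big). \]
Now (\ref{eq3.999}) collapses the inner factor, since $w\rightarrow y = (((x\rightarrow y)\rightarrow y)\rightarrow x)\rightarrow y = y$; hence $(w\rightarrow y)\rightarrow x = y\rightarrow x$ and the displayed chain becomes $a\rightarrow x = a\rightarrow(y\rightarrow x)$, which is exactly the claim. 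The only point requiring care is the bookkeeping of the substitution into (\ref{eq44}), where three nested copies of the arrow occur and it is easy to mismatch which variable plays which role.

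I expected the genuine obstacle to lie in the reverse half of $w = y\rightarrow w$, namely $(y\rightarrow w)\rightarrow w = 1$, which one would naturally attack through (\ref{eq199}) or (\ref{eq1298}). Pursued directly, that route keeps reducing to auxiliary join-commutativity-type identities such as $((x\rightarrow y)\rightarrow y)\rightarrow x = y\rightarrow x$ or $y\rightarrow(y\rightarrow x)=y\rightarrow x$, each of which seems to circle back on itself. The decisive realisation is that one need not prove any of these: equation (\ref{eq44}) manufactures the term $(w\rightarrow y)\rightarrow x$ \emph{for free}, and (\ref{eq3.999}) is tailor-made to simplify $w\rightarrow y$, so chaining the two bypasses the commutativity detour entirely and yields the result in two lines.
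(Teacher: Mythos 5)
Your proof is correct and is essentially identical to the paper's own argument: both apply (\ref{eq44}) with the substitution $x\mapsto (x\rightarrow y)\rightarrow y$, $y\mapsto x$, $z\mapsto y$, and then use (\ref{eq3.999}) to collapse $(((x\rightarrow y)\rightarrow y)\rightarrow x)\rightarrow y$ to $y$. The extra reformulation via axiom (\ref{bckdef4}) is harmless motivation but plays no role in the actual derivation.
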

\begin{proof}
    $((x\rightarrow y)\rightarrow y)\rightarrow x \stackrel{(\ref{eq44})}{=} ((x\rightarrow y)\rightarrow y)\rightarrow ((((x\rightarrow y)\rightarrow y)\rightarrow x)\rightarrow y)\rightarrow x) \stackrel{(\ref{eq3.999})}{=} ((x\rightarrow y)\rightarrow y)\rightarrow (y\rightarrow x)$.
\end{proof}

\begin{lemma} If (X, $\rightarrow$, $1$) is an implicative aBE algebra, then it satisfies:
    \begin{equation}\label{eq3.889}
        y\rightarrow x = ((x\rightarrow y)\rightarrow y)\rightarrow x
    \end{equation}
\end{lemma}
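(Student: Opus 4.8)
The plan is to derive the identity directly from the two lemmas already at hand, namely (\ref{eq3.112}) and (\ref{eq199}), without going back to the axioms at all. Observe that the left-hand side of the claim, $((x\rightarrow y)\rightarrow y)\rightarrow x$, is verbatim the left-hand side of (\ref{eq3.112}). So the first step is simply to invoke (\ref{eq3.112}) and rewrite it as $((x\rightarrow y)\rightarrow y)\rightarrow(y\rightarrow x)$.

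The second step is to recognise this nested term as an instance of the right-hand side of (\ref{eq199}). That lemma reads $u\rightarrow v=(z\rightarrow u)\rightarrow(u\rightarrow v)$ for all $u,v,z$. Setting $u:=y$, $v:=x$, and $z:=x\rightarrow y$, the factor $z\rightarrow u$ becomes $(x\rightarrow y)\rightarrow y$ and the factor $u\rightarrow v$ becomes $y\rightarrow x$, so (\ref{eq199}) collapses $((x\rightarrow y)\rightarrow y)\rightarrow(y\rightarrow x)$ to exactly $y\rightarrow x$. Chaining the two equalities then gives $((x\rightarrow y)\rightarrow y)\rightarrow x=y\rightarrow x$, which is the assertion.

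I expect no genuine obstacle here, since both ingredients are established earlier and the argument is a two-step substitution. The only point that takes a moment of care is the variable matching in (\ref{eq199}): one must notice that choosing $z:=x\rightarrow y$ is precisely what makes $z\rightarrow y$ coincide with the repeated factor $(x\rightarrow y)\rightarrow y$ produced by (\ref{eq3.112}). Once that alignment is spotted, the whole transitivity-flavoured content of the statement has already been absorbed into the preceding lemmas, and the proof is immediate.
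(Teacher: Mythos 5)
Your proposal is correct and takes exactly the same route as the paper, whose entire proof is the remark that the identity ``follows directly from (\ref{eq3.112}) and (\ref{eq199})''; your two-step argument, with the key substitution $z := x\rightarrow y$ in (\ref{eq199}), is precisely the detail that remark leaves implicit. (Only a cosmetic slip: you call $((x\rightarrow y)\rightarrow y)\rightarrow x$ the left-hand side of the claim, though as stated it is the right-hand side; the mathematics is unaffected.)
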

\begin{proof}
    Follows directly from (\ref{eq3.112}) and (\ref{eq199}).
\end{proof}

\begin{lemma}\label{th1} If (X, $\rightarrow$, $1$) is an implicative aBE algebra, then it satisfies:
    \begin{equation}
        (x\rightarrow y)\rightarrow y = x \text{ or } y\rightarrow x \neq 1.
    \end{equation}
\end{lemma}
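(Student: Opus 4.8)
The target is a disjunction, so the plan is to prove it in the equivalent implicational form: assuming $y \rightarrow x = 1$, I will deduce $(x\rightarrow y)\rightarrow y = x$; if instead $y\rightarrow x \neq 1$, the second disjunct of the statement holds trivially. The whole argument amounts to assembling two lemmas already in hand, (\ref{eq3.889}) and (\ref{eq1.538}), and requires no fresh manipulation of the axioms.

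First I would invoke (\ref{eq3.889}), which states $y\rightarrow x = ((x\rightarrow y)\rightarrow y)\rightarrow x$. Under the working hypothesis $y\rightarrow x = 1$, this identity immediately forces $((x\rightarrow y)\rightarrow y)\rightarrow x = 1$. This is the single point at which the hypothesis is used, and its role is precisely to pin down the value of the term $((x\rightarrow y)\rightarrow y)\rightarrow x$, which is the same term that (\ref{eq1.538}) constrains.

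Next I would apply (\ref{eq1.538}), the disjunction $x = (x\rightarrow y)\rightarrow y$ or $((x\rightarrow y)\rightarrow y)\rightarrow x \neq 1$. The preceding step ruled out the second alternative, so the first must hold, i.e.\ $(x\rightarrow y)\rightarrow y = x$, which is what we wanted.

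I do not anticipate any real obstacle: the substantive work has already been front-loaded into (\ref{eq3.889}), which itself rests on the chain culminating in (\ref{eq3.112}), and into (\ref{eq1.538}), which repackages axiom (\ref{bckdef5}). The only thing to watch is the direction of each disjunction and the verbatim match between the term produced by (\ref{eq3.889}) and the term governed by (\ref{eq1.538}).
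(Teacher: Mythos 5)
Your proposal is correct and is precisely the paper's own argument: the paper proves Lemma \ref{th1} as ``a combination of (\ref{eq1.538}) together with (\ref{eq3.889})'', which is exactly your substitution of the identity (\ref{eq3.889}) into the disjunction (\ref{eq1.538}). You have merely spelled out the details (using $y\rightarrow x = 1$ to eliminate the second disjunct of (\ref{eq1.538})) that the paper leaves implicit.
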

\begin{proof}
    This theorem is now just a combination of (\ref{eq1.538}) together with (\ref{eq3.889}).
\end{proof}

\begin{theorem} (Transitivity of implicative aBE algebras)
    Every implicative aBE algebra satisfies the transitivity property.
\end{theorem}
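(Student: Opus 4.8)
The plan is to reduce the whole statement to Lemma \ref{th1}, which guarantees that whenever $b \rightarrow a = 1$ one has $(a \rightarrow b) \rightarrow b = a$. Assuming the transitivity hypotheses $x \rightarrow y = 1$ and $y \rightarrow z = 1$, the goal is to show $x \rightarrow z = 1$. The key observation is that only the second hypothesis is needed to rewrite $z$ into a shape in which axiom (\ref{bckdef4}) can transport the first hypothesis into position.

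Concretely, I would first instantiate Lemma \ref{th1} at the pair $(z,y)$, i.e.\ substitute $x := z$ in its statement, obtaining the disjunction ``$(z \rightarrow y) \rightarrow y = z$ or $y \rightarrow z \neq 1$''. Since $y \rightarrow z = 1$ by hypothesis, the right disjunct is false, so the left one must hold: $(z \rightarrow y) \rightarrow y = z$.

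Next I would substitute this identity for $z$ inside $x \rightarrow z$ and compute
\[
x \rightarrow z = x \rightarrow \big((z \rightarrow y) \rightarrow y\big) \stackrel{(\ref{bckdef4})}{=} (z \rightarrow y) \rightarrow (x \rightarrow y) = (z \rightarrow y) \rightarrow 1 \stackrel{(\ref{bckdef2})}{=} 1,
\]
where the middle equality uses the first hypothesis $x \rightarrow y = 1$ and the last uses axiom (\ref{bckdef2}). This gives $x \rightarrow z = 1$, as required.

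As for difficulty, essentially all the real content has been front-loaded into Lemma \ref{th1} and the chain of lemmas establishing it, so the transitivity proof itself is a two-line calculation. The only thing to get right is the bookkeeping: choosing the correct substitution into Lemma \ref{th1} so that the surviving disjunct is exactly $(z \rightarrow y) \rightarrow y = z$, and then recognizing that axiom (\ref{bckdef4}) is precisely the tool that swaps the antecedents $x$ and $z \rightarrow y$ so that the hypothesis $x \rightarrow y = 1$ can be absorbed via axiom (\ref{bckdef2}). I do not anticipate any genuine obstacle beyond this.
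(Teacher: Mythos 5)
Your proposal is correct and follows essentially the same route as the paper: both invoke Lemma \ref{th1} with the hypothesis $y \rightarrow z = 1$ to obtain $(z \rightarrow y) \rightarrow y = z$, then apply axiom (\ref{bckdef4}) and axiom (\ref{bckdef2}) to conclude $x \rightarrow z = 1$. The only difference is cosmetic: the paper wraps this computation in a proof by contradiction, whereas you argue directly, which is if anything slightly cleaner.
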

\begin{proof}
    Assume by contradiction that there exists algebra for which this condition does not hold, i.e. exists $a,b,c$ such that $a\rightarrow b = 1, b \rightarrow c = 1$, but $a\rightarrow c \neq 1$. Then from Lemma \ref{th1}, since $b\rightarrow c = 1$, we must have $c = (c\rightarrow b)\rightarrow b$. But multiplying both sides from left by $a$ gives us 
    $$a\rightarrow c = a\rightarrow ((c\rightarrow b)\rightarrow b) \stackrel{(4)}{=} (c\rightarrow b)\rightarrow (a\rightarrow b) = (c\rightarrow b)\rightarrow 1 \stackrel{(2)}{=} 1 $$
    which is a contradiction with $a\rightarrow c \neq 1$.
\end{proof}

Denis Zelent\\
Department of Mathematical Sciences\\
Norwegian University of Science and Technology\\
7491 Trondheim, Norway\\
e-mail: zelden99@zohomail.eu
\end{document}